\definecolor{MyDarkBlue}{RGB}{54,117,23}
\definecolor{MyDarkBlue}{cmyk}{0.8,0.3,0.8,0.4}
\definecolor{yellow}{rgb}{0.99,0.99,0.70}
\definecolor{white}{rgb}{1.0,1.0,1.0}
\definecolor{black}{rgb}{0.00,0.00,0.00}
\newcommand{\mylabel}[2]{#2\def\@currentlabel{#2}\label{#1}}
\newcounter{counterConstant}
\numberwithin{equation}{section}
\newtheorem{theorem}{Theorem}[section]
\newtheorem{lemma}[theorem]{Lemma}
\newtheorem{remark}[theorem]{Remark}
\newtheorem{corollary}[theorem]{Corollary}
\def\cF{{\mathcal F}}
\def\mI{{\mathbb I}}
\def\mN{{\mathbb N}}
\def\mR{{\mathbb R}}
\def\mS{{\mathbb S}}
\def\bE{{\mathbf E}}
\def\bP{{\mathbf P}}
\def\sS{{\mathscr S}}
\def\l{\left}
\def\r{\right}
\def\<{\langle}
\def\>{\rangle}
\def\geq{\geqslant}
\def\leq{\leqslant}
\def\1{{\mathbf{1}}}
\def\d{\text{\rm{d}}}
\def\e{\mathrm{e}}
\def\eps{\varepsilon}
\def\Re{{\mathrm{Re}}}
\begin{document}

\title{Euler-Maruyama scheme for SDE driven by L\'evy process with H\"older drift}

\author{Yanfang Li and Guohuan Zhao} 

\address{Institute of Applied Mathematics, Academy of Mathematics and Systems Science, CAS, Beijing, 100190, China}
\email{liyanfang@amss.ac.cn}

\address{Institute of Applied Mathematics, Academy of Mathematics and Systems Science, CAS, Beijing, 100190, China}
\email{gzhao@amss.ac.cn}

\thanks{Research Guohuan is supported by the National Natural Science Foundation of China (No. 12288201); Research Yanfang is supported by the China Postdoctoral Science Foundation (No. 2022M723328)} 

\begin{abstract} 
This study focuses on approximating solutions to SDEs driven by L\'evy processes with H\"older continuous drifts using the Euler-Maruyama scheme. We derive the $L^p$-error for a broad range of driven noises, including all nondegenerate $\alpha$-stable processes ($0<\alpha<2$).
\end{abstract}

\maketitle

\bigskip
\noindent 
\textbf{Keywords}: Stochastic differential equation, L\'evy process, Euler-Maruyama scheme 

\noindent
  {\bf AMS 2020 Mathematics Subject Classification: Primary 39A50;  Secondary 41A25, 60J76} 
\section{Introduction}
Let $Z$ be a pure jump L\'evy process whose characteristic exponent is given by 
\[
\psi(\xi) = \log \bE \e^{i \xi\cdot Z_1} = \int_{\mR^d} \l(\e^{i\xi\cdot z}-1-i \xi\cdot z \1_{B_1}(z)\r) \nu(\d z), 
\]
where $\nu$ is the intensity measure satisfying $\int_{\mR^d} (1\wedge |z|^2)\nu(\d z)<\infty$. Consider the following stochastic differential equation (SDE) driven by $Z$: 
\begin{equation}\label{Eq-SDE}
X_t=X_0+\int_0^t b(X_s) \d s + Z_t,  
\end{equation}
as well as its Euler-Maruyama scheme 
\begin{equation}\label{Eq-EM}
X_t^n= X_0^n+ \int_0^t b(X^n_{k_n(s)}) \d s + Z_t,   
\end{equation}
where $k_n(t):= [nt]/n$.

The main purpose of this paper is to study the strong convergence rate of the Euler-Maruyama approximation for \eqref{Eq-SDE}, where $Z$ belongs to a wide class of L\'evy processes. The main result is formulated as follows: 
\begin{theorem}\label{Thm}
Assume that there are constants $c_0>0$, $\alpha\in (0,2)$ and $ M>0$ such that 
\begin{equation}\label{Eq-aspt1}
\Re(-\psi(\xi)) \geq c_0 |\xi|^\alpha, \ \mbox{ for all } |\xi|\geq M,  
\end{equation} 
and $b\in C^\beta$ with $\beta\in (1-\alpha/2, 1)$.  Then for each $p>0$, there is a constant C depending on $d, c_0, M, \alpha, \beta,  p, \|b\|_{\beta}$ such that 
\begin{equation}\label{Eq-main}
\bE \sup_{t\in [0,1]} |X_t^n-X_t|^p \leq C \l[ \bE|X^n_0-X_0|^p+\bE \l(1\wedge |Z_{1/n}|^{p\beta}\r)\r].
\end{equation}
\end{theorem}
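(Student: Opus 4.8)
The naive bound --- estimating the integrand of $\int_0^t\bigl(b(X^n_{k_n(s)})-b(X_s)\bigr)\d s$ by $\|b\|_\beta|X^n_s-X_s|^\beta$ and applying Gr\"onwall --- fails because $\beta<1$; likewise the elementary ``continuity'' bound $\int_0^1|b(X^n_r)-b(X_r)|\d r\le\|b\|_\beta\bigl(1\wedge\sup_r|X^n_r-X_r|^\beta\bigr)$ produces a self-improving inequality with a lower power on the right-hand side and does not close. The plan is therefore to exploit the smoothing of $Z$ through two ingredients: gradient heat-kernel estimates and the stochastic sewing lemma. From \eqref{Eq-aspt1} one obtains that the transition semigroup $(P_t)$ of $Z$ gains $\alpha$ derivatives, so that $\|\nabla P_t f\|_\infty\lesssim t^{-(1-\beta)/\alpha}\|f\|_\beta$ for $f\in C^\beta$ and $t\in(0,1]$. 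A Zvonkin-type change of variables transfers the same gradient bound to the transition semigroup $P^X_{s,t}$ of the SDE \eqref{Eq-SDE} (which is well-posed under $\beta>1-\alpha/2$): $\|\nabla P^X_{s,t}f\|_\infty\lesssim(t-s)^{-(1-\beta)/\alpha}\|f\|_\beta$. The exponent $(1-\beta)/\alpha$ is strictly less than $\tfrac12$ precisely because $\beta>1-\alpha/2$, and this is the numerology that makes the estimates below converge.

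Set $\psi_t:=X^n_t-X_t$ and split $b(X^n_{k_n(s)})-b(X_s)=\bigl(b(X^n_s)-b(X_s)\bigr)+\bigl(b(X^n_{k_n(s)})-b(X^n_s)\bigr)$. For the first (``feedback'') part I apply the stochastic sewing lemma to the germ $A_{s,t}:=\bE\bigl[\int_s^t\bigl(b(X^n_r)-b(X_r)\bigr)\d r\bigm|\cF_s\bigr]$. Since $r\mapsto\psi_r$ has bounded derivative, $\psi_r\approx\psi_s$ on $[s,t]$, so conditionally $b(X^n_r)-b(X_r)$ is close to $g_s(X_r)$ with $g_s:=b(\cdot+\psi_s)-b(\cdot)$; combining $\|g_s\|_\infty\lesssim\|b\|_\beta|\psi_s|^\beta$ and the H\"older gain of $b$ under translation with the smoothing of $P^X$ bounds $\|A_{s,t}\|_{L^p}$ and the sewing defect $\|A_{s,t}-A_{s,u}-A_{u,t}\|_{L^p}$ by $(t-s)^{1-(1-\beta)/\alpha}\sup_{[0,1]}\|\psi_r\|_{L^p}$. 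As $1-(1-\beta)/\alpha>\tfrac12$, the sewing lemma then gives $\bigl\|\int_s^t\bigl(b(X^n_r)-b(X_r)\bigr)\d r\bigr\|_{L^p}\lesssim(t-s)^{1-(1-\beta)/\alpha}\sup_{[0,1]}\|\psi_r\|_{L^p}$.

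The second (``discretization'') part is handled analogously, now with germ $\tilde A_{s,t}:=\bE\bigl[\int_s^t\bigl(b(X^n_{k_n(r)})-b(X^n_r)\bigr)\d r\bigm|\cF_{k_n(s)}\bigr]$. From $X^n_r-X^n_{k_n(r)}=\int_{k_n(r)}^rb(X^n_{k_n(u)})\d u+\bigl(Z_r-Z_{k_n(r)}\bigr)$ together with $\|b\|_\infty\le\|b\|_\beta$, the increment $|b(X^n_{k_n(r)})-b(X^n_r)|$ is dominated by $\|b\|_\beta\bigl(1\wedge(n^{-\beta}+|Z_r-Z_{k_n(r)}|^\beta)\bigr)$ --- which is where the $1\wedge$ in \eqref{Eq-main} originates. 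The sewing estimate (again convergent because $\beta>1-\alpha/2$), combined with the independence and stationarity of the increments of $Z$ and a Doob/Ottaviani maximal inequality that replaces $\sup_{|r|\le1/n}|Z_{\cdot+r}-Z_\cdot|$ by $|Z_{1/n}|$, yields $\bigl\|\int_0^1\bigl(b(X^n_{k_n(r)})-b(X^n_r)\bigr)\d r\bigr\|_{L^p}^p\lesssim\bE\bigl(1\wedge|Z_{1/n}|^{p\beta}\bigr)$.

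Combining the last two estimates, for $0\le s\le t\le1$ with $t-s$ small one has $\bigl\|\sup_{[s,t]}|\psi|\bigr\|_{L^p}\lesssim\|\psi_s\|_{L^p}+(t-s)^{1-(1-\beta)/\alpha}\sup_{[0,1]}\|\psi_r\|_{L^p}+\bigl(\bE(1\wedge|Z_{1/n}|^{p\beta})\bigr)^{1/p}$. Fixing a partition of $[0,1]$ into finitely many subintervals whose length depends only on $d,c_0,M,\alpha,\beta,p,\|b\|_\beta$ absorbs the middle term on each piece; iterating over the subintervals bounds $\sup_{[0,1]}\|\psi_r\|_{L^p}$ by $\|\psi_0\|_{L^p}+\bigl(\bE(1\wedge|Z_{1/n}|^{p\beta})\bigr)^{1/p}$, and feeding this back into the subinterval estimate (keeping the running supremum inside, which the sewing lemma permits) gives \eqref{Eq-main}; the restriction $p\ge2$ inherent in the sewing lemma is removed by a routine localization. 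The main obstacle is the feedback estimate (and its analogue for the discretization term): one must convert the mere H\"older regularity of $b$ into genuinely super-linear-in-$(t-s)$ control of the conditional expectations $A_{s,t}$, which forces one to establish the gradient heat-kernel bound for the \emph{perturbed} semigroup $P^X_{s,t}$ --- the regularization effect of $Z$ must survive the addition of the rough drift --- and the whole argument hinges on the exponent $1-(1-\beta)/\alpha$ exceeding $\tfrac12$, which is exactly the hypothesis $\beta>1-\alpha/2$.
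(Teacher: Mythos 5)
Your strategy (stochastic sewing plus gradient bounds for the transition semigroups of $Z$ and of the SDE) is genuinely different from the paper's, which is a Zvonkin-type transform: solve $\lambda u-Lu-b\cdot\nabla u=b$, show $u\in C^{\alpha+\gamma}$ with $\alpha+\gamma>1$ and $\|u\|_{C^1}\leq 1/4$ for $\lambda$ large, and use It\^o's formula to rewrite $\int_s^t b(X_r)\,\d r$ and $\int_s^t b(X^n_r)\,\d r$ so that the feedback term is never estimated through the H\"older continuity of $b$ at all; only the discretization term $b(X^n_{k_n(r)})-b(X^n_r)$ is bounded crudely by $1\wedge|X^n_{k_n(r)}-X^n_r|^\beta$, and the proof closes by absorption on small time intervals. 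Crucially, the regularity input (Theorem \ref{Thm-Regularity}) is derived from the Bernstein-type inequality of Lemma \ref{Le-Bernstein2} in $L^p$-based Besov spaces, which uses \emph{only} the one-sided bound \eqref{Eq-aspt1} on $\Re(-\psi)$. This is exactly where your argument has a genuine gap: the estimate $\|\nabla P_tf\|_\infty\lesssim t^{-(1-\beta)/\alpha}\|f\|_{C^\beta}$ does not follow from \eqref{Eq-aspt1}. That hypothesis gives $|\e^{t\psi(\xi)}|\leq \e^{-c_0t|\xi|^\alpha}$ for $|\xi|\geq M$, hence a bounded smooth density, but an $L^\infty\to L^\infty$ gradient bound for $P_t$ amounts to $\|\nabla p_t\|_{L^1}\lesssim t^{-1/\alpha}$, i.e.\ spatial decay of $\nabla p_t$, which is governed by upper bounds on the $\xi$-derivatives of $\e^{t\psi}$ --- information that \eqref{Eq-aspt1} does not provide. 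Here $\psi$ need not even be differentiable (e.g.\ when $\int_{|z|\geq 1}|z|\,\nu(\d z)=\infty$), and $\nu$ is explicitly allowed to be singular (say, a stable process with discrete spectral measure); avoiding precisely this heat-kernel input is the paper's stated contribution. Your first ingredient is therefore unavailable in the generality of the theorem, and the subsequent transfer to the perturbed semigroup $P^X_{s,t}$ inherits the problem.

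Two further points. First, in the sewing estimate for the feedback term, freezing $X^n_r-X_r$ at time $s$ costs $\|b\|_\beta|(X^n_r-X_r)-(X^n_s-X_s)|^\beta$; the available bound on this increment is $(r-s)^\beta$ times $\beta$-th powers of $\sup_u|X^n_u-X_u|$ and of the discretization error, so the defect returns $\sup_u\|X^n_u-X_u\|_{L^p}^{\beta^2}$ rather than the first power, and the buckling inequality you write does not close as stated --- making this work is the delicate bootstrap that occupies the stochastic-sewing literature, not a one-line step. Second, your own decomposition exhibits the term $n^{-\beta}$ coming from the drift part of $X^n_r-X^n_{k_n(r)}$, which is then silently dropped; when $\alpha<1$ and $\beta p<1$ this is \emph{not} dominated by $\bE(1\wedge|Z_{1/n}|^{\beta p})$, but since the paper's proof performs the same suppression when passing from $|X^n_{k_n(t)}-X^n_t|$ to $|Z_{k_n(t)}-Z_t|$, this is a shared issue rather than a point of difference.
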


\begin{remark}
    One sufficient condition for $Z$ to satisfy \eqref{Eq-aspt1} is 
    \begin{equation}\label{eq-nondege}
    \int_{|z|\leq \rho} |\eta\cdot z|^2 \nu(\d z) \geq c \rho^{2-\alpha},  \  \forall \eta\in \mS^{d-1}, \rho \in (0,\rho_0], 
    \end{equation}
    where $c>0$ and $\rho_0>0$ are two positive constants. The reason is: using $1-\cos t \geq t^2/3 \ (t\in [-1, 1])$, we have 
    \begin{align*}
    \int_{\mR^d} \l( 1-\cos(\xi\cdot z) \r) \nu(\d z)
    \geq& c \int_{|z|\leq |\xi|^{-1}} |z\cdot \xi|^2 \nu(\d z) \\
    \geq& c |\xi|^2 \int_{|z|\leq |\xi|^{-1}} |\hat{\xi} \cdot z|^2 \nu(\d z ) \overset{\eqref{eq-nondege}}{\geq} c_0 |\xi|^\alpha, \quad \forall \, |\xi| \gg 1. 
    \end{align*}
\end{remark}

The Euler-Maruyama approximation of stochastic differential equations (SDEs) is a well-established field of research in probability theory and numerical analysis, with a vast body of literature dedicated to it. A notable phenomenon in this area is the regularization of the noise for schemes with irregular drift. For instance, Gy\"{o}ngy-Krylov \cite{gyongy1996existence} established the convergence (without an explicit rate) of the Euler-Maruyama scheme when the driven noise is the Brownian motion and the drift coefficient satisfies certain integrability conditions. Recently, researchers have imposed $\beta$-H\"older type conditions on the modulus of continuity of drifts in \cite{long2017strong}, \cite{bao2019convergence}, and \cite{suo2022weak} (with the latter two works discussing more general cases). Although the index $\beta$ can be arbitrarily small, the drawback is that the convergence rates obtained become increasingly worse as $\beta$ approaches zero. When the driven noise is an $\alpha$-stable process with $\alpha\in (0,2)$, and the drift coefficient is only $\beta$-H\"older continuous with $\beta>1-\alpha/2$, the strong well-posedness of \eqref{Eq-SDE} has been studied in \cite{tanaka1974perturbation}, \cite{priola2012pathwise}, \cite{priola2015stochastic},\cite{chen2018stochastic}, and \cite{chen2021supercritical}. However, only for $\alpha\in [1,2)$, the rate of strong convergence for the Euler-Maruyama approximation of SDE \eqref{Eq-SDE} has been studied in \cite{menoukeu2017strong}, \cite{mikulevivcius2018rate}, \cite{huang2018euler}, and \cite{kuhn2019strong}. Notably, all of the convergence rates obtained in these works depend on the regularity of the drift coefficient, and they become increasingly worse as $\beta$ approaches $1-\alpha/2$. 

Our contribution is to relax some constraints on noise in previous studies. Specifically, we address a scenario where the intensity measure $\nu$ is singular with respect to the Lebesgue measure and the parameter $\alpha$ lies in the whole range $(0,2)$. Our approach is quite straightforward. In section \ref{Sec-auxiliary},we consider the following resolvent equation that corresponds to \eqref{Eq-SDE}
\begin{equation}\label{Eq-PDE}
\lambda u-Lu-b\cdot\nabla u= f, 
\end{equation}
where $L$ is the infinitesimal generator of $Z$, i.e. 
\[
  L u(x) =\int_{\mR^d}\l( f(x+z)-f(x)-\nabla f(x)\cdot z \1_{B_1}\r)~\nu(\d z). 
\]
Using the similar line of proof from the second named author's previous work \cite[Theorem 1.1]{zhao2021regularity} (see also \cite{chen2021supercritical}), we arrive at the primary auxiliary analytic result, Theorem \ref{Thm-Regularity}, which establishes a good regularity estimate for solutions to \eqref{Eq-PDE}. Then in section \ref{Sec-main}, by re-expressing the drift term in equations \eqref{Eq-SDE} and \eqref{Eq-EM} in a form that facilitates comparison between the two, we prove our main result. 

We close this section by mentioning the remarkable contributions of recent  works, such as \cite{dareiotis2020regularisation}, \cite{butkovsky2021approximation} and \cite{bencheikh2022convergence}, which have shown that an almost $1/2$ rate of convergence holds for all H\"older (or Dini) continuous coefficients, when the driven noises are Brownian motions. These results are further supported by related works such as \cite{leobacher2017strong} and \cite{muller2020performance}. However, to the best of our knowledge, there is currently no literature that investigates whether the convergence rate is insensitive to the regularity of $b$ when $Z$ is modeled by an $\alpha$-stable process. This issue is beyond the scope of this short note, so we will investigate it in our future work.
\section{Auxiliary Results}\label{Sec-auxiliary}
In this section, we formulate some results that will be used in the proof of Theorem \ref{Thm}. Before that let us introduce some notions and recall very basic facts from Littlewood-Paley theory. Let $\sS(\mR^d)$ be the Schwartz space of all rapidly decreasing functions, and $\sS'(\mR^d)$ the dual space of $\sS(\mR^d)$ 
called Schwartz generalized function (or tempered distribution) space. Denote the Fourier transform of $f\in \sS'(\mR^d)$ by $\cF f$ or  $\hat f$. 

Let $\chi:\mR^d\to[0,1]$ be a smooth radial function so that $\chi|_{B_{3/4}}=1$ and $\chi|_{B_1^c}=0$. Define
$$
\varphi(\xi):=\chi(\xi)-\chi(2\xi).
$$
The dyadic block operator 
$\Delta_j$ is defined by
$$
\Delta_j f:=
\left\{
\begin{array}{ll}
\cF^{-1}(\chi(2\cdot) \cF f), & j=-1, \\
\cF^{-1}(\varphi(2^{-j}\cdot) \cF f),& j\geq 0.
\end{array}
\right.
$$
For $s\in\mR$ and $p\in[1,\infty]$, the Besov space $B^s_{p,p}$ is defined as the set of all $f\in\sS'(\mR^d)$ with
\[
\|f\|_{B^s_{p,p}}:=\1_{\{p<\infty\}}\left(\sum_{j\geq -1}2^{jsp}\|\Delta_j f\|_p^p\right)^{1/p}+\1_{\{p=\infty\}}\left(\sup_{j\geq -1}2^{js}\|\Delta_j f\|_p\right)<\infty.
\]

For each $s>0$ with $s\notin \mN$, and $p\in [1,\infty)$, the H\"older space and Sobolev-Slobodeckij space are defined by 
\[
\|f\|_{C^s}= \sum_{0\leq k\leq [s]}\|\nabla^kf\|_{L^\infty}+\sup_{x\neq y} \frac{|\nabla^{[s]}f(x)-\nabla^{[s]}f(y)|}{|x-y|^{s-[s]}}
\]
and 
\[
\|f\|_{W^s_p}:= \sum_{0\leq k\leq [s]}\|\nabla^kf\|_{L^p} + \l( \int\!\!\!\int_{\mR^d\times\mR^d} \frac{|\nabla^{[s]}f(x)-\nabla^{[s]}f(y)|}{|x-y|^{d+(s-[s])p}} \r)^{1/p}, 
\]
respectively. We have the following relations for the above functional spaces: 
\begin{equation*}
    B^s_{p,p}=W^s_p, \quad B^{s}_{\infty,\infty}=C^s 
\end{equation*}
(see for instance \cite{triebel1992theory}). 

In the following, we present some analytical results that are required to prove our main theorem. Firstly, we introduce a novel form of Bernstein's inequality, as presented in \cite{chen2007new}.
\begin{lemma}\label{Le-Bernstein1}
 For any $2\leq p<\infty$, $j\geq 0$ and $\alpha\in(0,2)$, there is a constant $c>0$ such that for all $f\in\sS'(\mR^d)$, 
\begin{align}\label{Eq-NewBernstein1}
\int_{\mR^d}\Big|(-\Delta)^{\alpha/4}|\Delta_j f|^{p/2}\Big|^2 \geq c 2^{\alpha j}\|\Delta_j f\|_p^p. \end{align}
\end{lemma}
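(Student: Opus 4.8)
\emph{Reduction.} Since $\|(-\Delta)^{\alpha/4}(|\Delta_jf|^{p/2})\|_2^2\big/\|\Delta_j f\|_p^p$ is invariant under $f\mapsto f(2^j\,\cdot)$ up to the factor $2^{\alpha j}$, it suffices to treat $j=0$: with $g:=\Delta_0 f$ one has $\operatorname{supp}\widehat g\subset\{3/8\le|\xi|\le1\}=:\mathcal A$, and we must show $\|(-\Delta)^{\alpha/4}(|g|^{p/2})\|_2^2\ge c\,\|g\|_p^p$ with $c=c(d,p,\alpha)>0$. We also assume $g$ real‑valued: this is the case used in the application (the solution $u$ of \eqref{Eq-PDE} is real when $b,f$ are), and it is genuinely needed, since for complex $g$ the inequality fails — take $g(x)=A(x)e^{i2^j x_1}$ with $A$ a fixed Schwartz bump, so $|g|=|A|$ while $\|g\|_p$ is fixed and $2^{\alpha j}\to\infty$. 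Writing $h:=|g|^{p/2}$, so $\|h\|_2^2=\|g\|_p^p$, Plancherel gives, for any $\rho>0$,
\[
\|(-\Delta)^{\alpha/4}h\|_2^2=\int_{\mathbb R^d}|\xi|^\alpha\,|\widehat h(\xi)|^2\,d\xi\ \ge\ \rho^\alpha\Bigl(\|h\|_2^2-\int_{|\xi|<\rho}|\widehat h(\xi)|^2\,d\xi\Bigr),
\]
so the lemma reduces to the following \emph{spectral‑gap estimate}: there is $\rho=\rho(d,p)>0$ such that
\begin{equation}\label{eq-planSG}
\int_{|\xi|<\rho}\bigl|\widehat{|g|^{p/2}}(\xi)\bigr|^2\,d\xi\ \le\ \tfrac12\,\|g\|_p^p\qquad\text{for every real }g\text{ with }\operatorname{supp}\widehat g\subset\mathcal A ,
\end{equation}
for then $c=\tfrac12\rho^\alpha$ works (and the $2^{\alpha j}$ comes back from the rescaling).

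\emph{Proof of \eqref{eq-planSG} by compactness.} Suppose \eqref{eq-planSG} fails for every $\rho$; a diagonal argument then produces real $g_n$ with $\operatorname{supp}\widehat{g_n}\subset\mathcal A$, $\|g_n\|_p=1$, and $\int_{|\xi|\ge\rho}|\widehat{|g_n|^{p/2}}|^2\to0$ for each fixed $\rho>0$. Since $\widehat{g_n}$ lives in the fixed compact set $\mathcal A$, Bernstein's inequality gives $\|g_n\|_\infty+\|\nabla g_n\|_\infty\le C(d,p)$, so $\{g_n\}$ is equi‑bounded and equi‑Lipschitz and $\{|g_n|^{p/2}\}$ is bounded in $C^{0,1}\cap L^\infty$. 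Look at the concentration function $\sup_y\int_{B_1(y)}|g_n|^p$. In the \emph{non‑vanishing} case one translates the $g_n$ so that $\int_{B_1(0)}|g_n|^p\ge\delta>0$ along a subsequence, extracts (Arzel\`a--Ascoli) a locally uniform limit $g_\ast$, which is real, nonzero, and has $\operatorname{supp}\widehat{g_\ast}\subset\mathcal A$ (frequency supports pass to $\mathcal S'$‑limits). Testing the convergence $|g_n|^{p/2}\to|g_\ast|^{p/2}$ (locally uniform, with a uniform $L^\infty$ bound) against Schwartz functions whose Fourier transform vanishes near $0$, and using $\int_{|\xi|\ge\rho}|\widehat{|g_n|^{p/2}}|^2\to0$, forces $\widehat{|g_\ast|^{p/2}}$ to be supported at the origin; hence $|g_\ast|^{p/2}$ is a bounded nonnegative polynomial, i.e.\ a constant, so $|g_\ast|$ is constant, so ($g_\ast$ being real and continuous) $g_\ast$ is constant — but its spectrum lies in $\mathcal A\not\ni 0$, so $g_\ast\equiv0$, contradicting $g_\ast\neq0$. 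The \emph{vanishing} case $\sup_y\int_{B_1(y)}|g_n|^p\to0$ is excluded by a Lions‑type concentration‑compactness argument: the $C^1$‑bound forces $\|g_n\|_\infty\to0$, and the resulting ``flatness'' of $|g_n|^{p/2}$ is incompatible with $g_n$ having spectrum bounded away from the origin, by the same rigidity.

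\emph{Main obstacle.} The reduction is routine; the content is the spectral‑gap estimate \eqref{eq-planSG}, i.e.\ the rigidity that a \emph{real} function whose spectrum avoids a neighborhood of $0$ cannot have its ``rectification'' $|g|^{p/2}$ concentrated near frequency $0$ — equivalently, $|g|^{p/2}$ retains a definite fraction of its $L^2$‑energy at frequencies comparable to those of $g$. Making this fully rigorous (in particular ruling out the vanishing scenario, where mass escapes to spatial infinity) is the delicate point; this is precisely the ``new Bernstein inequality'' of Chen--Miao--Zhang, so one may alternatively just invoke \cite{chen2007new}.
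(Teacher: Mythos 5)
The paper does not prove this lemma at all: it is quoted from \cite{chen2007new} and used as a black box, so there is no internal argument to compare yours against. On its own terms, your reduction is correct: the rescaling to $j=0$, the Plancherel step, and the reformulation as a spectral-gap estimate (a fixed fraction of the $L^2$-mass of $|g|^{p/2}$ must sit at frequencies $|\xi|\geq\rho$ whenever $\widehat g$ is supported in the unit annulus) are all accurate, and your observation that the inequality as literally stated fails for complex-valued $f$ (the modulated bump $A(x)\e^{i2^jx_1}$) is a legitimate criticism of the statement --- the lemma is only used, and only true, for real $\Delta_jf$, consistent with the real-valuedness implicitly assumed in the proof of Lemma \ref{Le-Bernstein2}.

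However, your compactness argument has a genuine gap exactly where you flag the ``delicate point'': the vanishing case. The non-vanishing branch is essentially sound (translate, extract a locally uniform limit $g_\ast\not\equiv0$ with spectrum in the annulus, show $\widehat{|g_\ast|^{p/2}}$ is supported at the origin, conclude $|g_\ast|$ is a constant and get a contradiction). But when $\sup_y\int_{B_1(y)}|g_n|^p\to0$ the weak limit is zero, there is no limit object to which ``the same rigidity'' can be applied, and the sentence ``the resulting flatness of $|g_n|^{p/2}$ is incompatible with $g_n$ having spectrum bounded away from the origin'' is an assertion of the lemma in the vanishing regime, not a proof of it. Note that nonnegative $h_n$ with $\|h_n\|_2=1$, $\|h_n\|_\infty\to0$ and $\widehat{h_n}$ concentrating at the origin certainly exist (e.g.\ $n^{-d/2}\phi(\cdot/n)$); what must be excluded is that such an $h_n$ arises as $|g_n|^{p/2}$ with $\widehat{g_n}$ supported in the annulus, and ruling this out requires a quantitative, translation-local input that a soft compactness-after-translation scheme does not supply. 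This is precisely why the proofs in the literature are constructive rather than by contradiction: they reduce the claim, via a C\'ordoba--C\'ordoba/Stroock--Varopoulos-type pointwise inequality (the same device used in the paper's proof of Lemma \ref{Le-Bernstein2}), to the generalized Bernstein inequality for frequency-localized functions, with explicit constants. As written, your argument establishes the lemma only modulo the vanishing scenario; to close it you must either handle that case or, as you note, simply invoke \cite{chen2007new} --- which is all the paper itself does.
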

Secondly, following \cite{chen2021supercritical}, we have the following crucial lemma. 
\begin{lemma}\label{Le-Bernstein2}
Suppose $\Re (-\psi (\xi))\geq c_0|\xi|^\alpha$ for some $c_0>0$ and $|\xi|\geq M>0$. Then for any 
$p>2$, there are constants 
$c_p=c(c_0,p)>0$ and $j_0=j_0(c_0, M)$ such that for all $j\geq j_0$, it holds that 
\begin{align}\label{Eq-Bernstein2}
 \int_{\mR^d}|\Delta_j f|^{p-2} ( \Delta_j f) L  \Delta_j f \leq -c_p 2^{\alpha j}\|\Delta_j f\|_p^p,
  \end{align}
and for all $j=-1, 0, 1,\cdots$, 
\begin{equation}\label{Eq-positive}
 \int_{\mR^d}|\Delta_{j} f|^{p-2} (\Delta_{j} f) L  \Delta_{j} f\leq 0.
 \end{equation}
\end{lemma}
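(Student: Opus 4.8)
The plan is to exploit the two Bernstein-type inequalities in tandem: the ``new'' Bernstein inequality of Lemma \ref{Le-Bernstein1}, which gives a clean lower bound for $\int |(-\Delta)^{\alpha/4}|\Delta_j f|^{p/2}|^2$, and a pointwise spectral comparison between $L$ and $-(-\Delta)^{\alpha/2}$ on frequency blocks of size $2^j$. First I would recall the Fourier-analytic identity expressing the quadratic form $\int |\Delta_j f|^{p-2}(\Delta_j f) L\Delta_j f$ in terms of the L\'evy symbol: since $L$ has symbol $\psi$, one has $\int g\, L g = \int \psi(\xi) |\hat g(\xi)|^2\, \d\xi$ for real $g$ (using that $\psi(-\xi) = \overline{\psi(\xi)}$, only the real part $\Re\psi$ survives). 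More usefully, I would use the carr\'e-du-champ / Dirichlet form representation
\[
-\int_{\mR^d} g(x)\, Lg(x)\,\d x = \tfrac12\int_{\mR^d}\!\!\int_{\mR^d} |g(x+z)-g(x)|^2\,\nu(\d z)\,\d x \geq 0,
\]
valid for $g\in\sS$, which immediately yields \eqref{Eq-positive} once one knows $|\Delta_j f|^{p-2}(\Delta_j f)\,L\,\Delta_j f$ integrates to something controlled by this form — here one needs the elementary inequality $|a|^{p-2}a\,(L\text{-increment of }a)$ type bound, i.e. the convexity fact that for $g = \Delta_j f$,
\[
\int |g|^{p-2} g \, Lg \leq -\,c\!\int\!\!\int \big||g|^{p/2}(x+z) - |g|^{p/2}(x)\big|^2 \nu(\d z)\,\d x \le 0,
\]
which is the standard Stroock--Varopoulos / Córdoba--Córdoba inequality for nonlocal operators. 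This handles \eqref{Eq-positive} for all $j\ge -1$ with no lower bound on the symbol needed.

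For the quantitative bound \eqref{Eq-Bernstein2}, I would combine the Stroock--Varopoulos step above with a lower bound on the Dirichlet form of the pure-jump part restricted to functions spectrally supported near $|\xi|\sim 2^j$. The assumption $\Re(-\psi(\xi))\ge c_0|\xi|^\alpha$ for $|\xi|\ge M$ is exactly the statement that, in the frequency domain, the symbol dominates $c_0|\xi|^\alpha$; so for $h := |\Delta_j f|^{p/2}$ one wants
\[
\int\!\!\int |h(x+z)-h(x)|^2\,\nu(\d z)\,\d x \;\gtrsim\; \int \Re(-\psi(\xi))\,|\hat h(\xi)|^2\,\d\xi.
\]
The subtlety is that $h=|\Delta_j f|^{p/2}$ is \emph{not} spectrally localized — raising a Paley block to the power $p/2$ spreads its frequencies — so I cannot directly say $|\hat h|$ is supported in $\{|\xi|\ge M\}$. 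This is the main obstacle. The standard remedy (following \cite{chen2021supercritical}) is: write $\int\Re(-\psi)|\hat h|^2 = \int_{|\xi|\le M} + \int_{|\xi|\ge M}$; on $|\xi|\ge M$ bound below by $c_0\int_{|\xi|\ge M}|\xi|^\alpha|\hat h|^2 \ge c_0\int|\xi|^\alpha|\hat h|^2 - c_0 M^\alpha\|h\|_2^2$, and the low-frequency error $c_0M^\alpha\|h\|_2^2 = c_0M^\alpha\|\Delta_j f\|_p^p$ is absorbed for $j\ge j_0$ because by Lemma \ref{Le-Bernstein1} the main term $\int|\xi|^\alpha|\hat h|^2 = \|(-\Delta)^{\alpha/4}h\|_2^2 \gtrsim 2^{\alpha j}\|\Delta_j f\|_p^p$ dominates $M^\alpha\|\Delta_j f\|_p^p$ once $2^{\alpha j}\ge 2 M^\alpha/c'$, i.e. $j\ge j_0(c_0,M)$.

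Putting the pieces together: for $j\ge j_0$,
\[
\int |\Delta_j f|^{p-2}(\Delta_j f)\,L\,\Delta_j f \;\le\; -c\,\Big(\int\!\Re(-\psi(\xi))|\widehat{|\Delta_j f|^{p/2}}(\xi)|^2\d\xi\Big) \;\le\; -c\,c_0\big( 2^{\alpha j} - C M^\alpha\big)\|\Delta_j f\|_p^p \;\le\; -c_p\,2^{\alpha j}\|\Delta_j f\|_p^p,
\]
where in the first inequality I used the Stroock--Varopoulos/Córdoba--Córdoba inequality to pass from the Dirichlet form of $L$ on $\Delta_j f$ to (a constant times) the Dirichlet form evaluated via $|\Delta_j f|^{p/2}$, and $c_p$ depends only on $c_0$ and $p$. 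The case $p=2$ is even simpler (the power $p/2 = 1$ needs no Stroock--Varopoulos step and $h = \Delta_j f$ is genuinely localized), but as stated the lemma only claims $p>2$, so this is not needed. I expect the bookkeeping around the Stroock--Varopoulos inequality for a general L\'evy generator (as opposed to the fractional Laplacian) and the precise dependence of $j_0$ on $(c_0,M)$ to be the only genuinely delicate points; everything else is Plancherel plus the two quoted Bernstein inequalities.
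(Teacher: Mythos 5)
Your proposal is correct and follows essentially the same route as the paper: the Stroock--Varopoulos step you invoke is exactly what the paper derives from the elementary convexity inequality $|a|^{p/2}-|b|^{p/2}\geq \tfrac{p}{2}(a-b)b|b|^{p/2-2}$, and your treatment of the non-localization of $|\Delta_j f|^{p/2}$ (bounding $\Re(-\psi(\xi))\geq c_0|\xi|^\alpha-c_0M^\alpha$ globally and absorbing the $M^\alpha\|\Delta_jf\|_p^p$ error via Lemma \ref{Le-Bernstein1} for $j\geq j_0$) is precisely the paper's argument. No gaps.
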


\begin{proof} 
For $p\geq 2$, by the elementary inequality 
 $|r|^{p/2}-1\geq \frac{p}{2}(r-1)$ for $r\in\mR$, 
we have
$$
 |a|^{p/2}-|b|^{p/2}\geq \tfrac{p}{2}(a-b)b|b|^{p/2-2} ,\  \ a,b\in\mR.
 $$
Letting $g$ be a smooth function, by definition we have
\begin{align*}
L  |g|^{p/2}(x)&=\int_{\mR^d}\Big(|g(x+z)|^{p/2}-|g(x)|^{p/2}-\1_{B_1}(z)z\cdot\nabla |g(x)|^{p/2}\Big)\nu(\d  z)\\
 &\geq\frac{p}{2}|g(x)|^{p/2-2}g(x)\int_{\mR^d}\Big(g(x+z)-g(x)-\1_{B_1}(z)z\cdot\nabla g(x)\Big)\nu(\d  z)\\
 &=\frac{p}{2}|g(x)|^{p/2-2}g(x)L  g(x).
 \end{align*}
 Multiplying both sides by $|g|^{p/2}$ and then integrating in $x$ over $\mR^d$, by Plancherel's formula, we obtain
\begin{align*}
\int_{\mR^d}|g|^{p-2}g\,L  g &\leq\frac{2}{p}\int_{\mR^d}|g|^{p/2}L  |g|^{p/2} 
=\frac{2}{p}\int_{\mR^d}|\cF(|g|^{p/2})(\xi)|^2\psi(\xi)\d \xi\\
&=\frac{2}{p}\int_{\mR^d}|\cF(|g|^{p/2})(\xi)|^2\mathrm{Re}(\psi (\xi))\d \xi\leq 0, 
\end{align*}
which implies \eqref{Eq-positive}. Moreover, by our assumption on $\psi$, we have $\mathrm{Re}(-\psi (\xi))\geq c_0 |\xi|^\alpha-M^\alpha$ (for all $\xi\in \mR^d$). Thus, 
\begin{align*}
\int_{\mR^d}|g|^{p-2}g\,L  g &\leq -\frac{2c_0}{p}\int_{\mR^d}|\cF(|g|^{p/2})(\xi)|^2 (|\xi|^\alpha-M^\alpha)\d \xi\\
&\leq -c(c_0, d, p)\int_{\mR^d}|(-\Delta)^{\alpha/4}|g|^{p/2}|^2\d x+{c_0M^\alpha}\|g\|_p^p.
\end{align*}
This in turn gives \eqref{Eq-Bernstein2} by \eqref{Eq-NewBernstein1} (taking $g=\Delta_j f$). Inequality \eqref{Eq-positive} is trivial, so we omit its proof here. 
\end{proof}

The following result is a refined version of \cite[Theorem 1.1]{zhao2021regularity} (with $\sigma=\mI$ therein). 
\begin{theorem}\label{Thm-Regularity}
Under the same assumptions of Theorem \ref{Thm}, there exists a constant $\lambda_0=\lambda_0(d, \alpha, c_0,M, \beta, \|b\|_\beta)$ such that for any $\lambda\geq \lambda_0$, equation \eqref{Eq-PDE} has a unique solution $u\in\bigcap_{\gamma<\beta} C^{\alpha+\gamma}$. Moreover, it holds that 
\begin{equation}
\lambda \|u\|_{C^\gamma}+ \|u\|_{C^{\alpha+\gamma}} \leq C \|f\|_{C^\beta}, 
\end{equation}
where $C$ only depends on $d,\alpha, c_0, M, \beta, \gamma$ and  $\|b\|_{C^\beta}$. 
\end{theorem}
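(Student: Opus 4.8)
The plan is to establish the a priori estimate
\[
\lambda\|u\|_{C^\gamma}+\|u\|_{C^{\alpha+\gamma}}\le C\|f\|_{C^\beta},\qquad \lambda\ge\lambda_0,
\]
for every bounded solution $u$, and then to deduce existence and uniqueness from it: uniqueness is immediate (apply the estimate to the difference of two solutions), and for existence one joins \eqref{Eq-PDE} to $\lambda u-Lu=f$ through the family $\lambda u-Lu-t\,b\cdot\nabla u=f$, $t\in[0,1]$, along which the estimate is uniform because $\|tb\|_{C^\beta}\le\|b\|_{C^\beta}$, and applies the continuity method; the endpoint $t=0$ is solvable since the resolvent representation $u=\int_0^\infty\e^{-\lambda s}P_s f\,\d s$, with $P_s$ the convolution semigroup generated by $L$, produces a bounded solution that, by the Littlewood--Paley argument below applied with $b\equiv0$, lies in $C^{\alpha+\gamma}$. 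Throughout one may assume $\alpha+\gamma\notin\mN$ (shrinking $\gamma$, which is harmless since the estimate is claimed for all $\gamma<\beta$) and, by mollifying the data, that $u$ is bounded and smooth.

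For the a priori bound, apply $\Delta_j$ to \eqref{Eq-PDE}. Since $b$ is only $C^\beta$, one cannot retain $b\cdot\nabla\Delta_j u$ in the equation — the natural energy computation would involve the nonexistent $\div b$ — so the key move is to paralinearize: writing $b\cdot\nabla u=T_b\cdot\nabla u+\cR$ with $\cR:=T_{\nabla u}b+R(b,\nabla u)$, one gets
\[
\lambda\Delta_j u-L\Delta_j u-S_{j-1}b\cdot\nabla\Delta_j u=\Delta_j f+\Delta_j\cR+\cE_j,
\]
where $\cE_j$ collects the paraproduct commutators, $\|\cE_j\|_p\lesssim 2^{(1-\beta)j}\|b\|_{C^\beta}\sum_{|k-j|\le 3}\|\Delta_k u\|_p$, and where the remainder gains regularity, $\|\cR\|_{C^{\alpha+\beta+\gamma-1}}\lesssim\|b\|_{C^\beta}\|u\|_{C^{\alpha+\gamma}}$, precisely because $\alpha+\beta>1$. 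Multiplying by $|\Delta_j u|^{p-2}\Delta_j u$ and integrating over $\mR^d$, Lemma \ref{Le-Bernstein2} bounds the $L$-term below by $c_p2^{\alpha j}\|\Delta_j u\|_p^p$ for $j\ge j_0$ (and by $0$ for all $j$, via \eqref{Eq-positive}), while the transport term contributes only $-\tfrac1p\int(\div S_{j-1}b)|\Delta_j u|^p\,\d x$, which is $\le C\,2^{(1-\beta)j}\|b\|_{C^\beta}\|\Delta_j u\|_p^p$. Since $1-\beta<\alpha$, this last quantity is absorbed into $c_p2^{\alpha j}\|\Delta_j u\|_p^p$ once $j$ exceeds a threshold depending only on $\|b\|_{C^\beta},c_p,\alpha,\beta$, and into $\lambda\|\Delta_j u\|_p^p$ for the finitely many smaller $j$ provided $\lambda\ge\lambda_0$. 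Dividing by $\|\Delta_j u\|_p^{p-1}$ gives, for every $j\ge -1$,
\[
\lambda\|\Delta_j u\|_p+2^{\alpha j}\|\Delta_j u\|_p\le C\bigl(\|\Delta_j f\|_p+\|\Delta_j\cR\|_p+\|\cE_j\|_p\bigr).
\]

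It remains to sum and to pass from $L^p$ (where Lemma \ref{Le-Bernstein2} operates) to the Hölder scale $C^s=B^s_{\infty,\infty}$. For the latter I would localize: multiplying \eqref{Eq-PDE} by a smooth cutoff equal to $1$ on $B(x_0,2)$ and supported in $B(x_0,3)$ produces, uniformly in $x_0$, an equation of the same form whose data are compactly supported with $C^\gamma$-norm $\lesssim\|f\|_{C^\beta}+\|u\|_{C^1}+\|u\|_{C^\beta}$ (the nonlocal commutator $[L,\chi]u$ costs at most $\|u\|_{C^1}$, since $\int_{|z|\le1}|z|^2\,\nu(\d z)+\nu(B_1^c)<\infty$), hence lie in every $B^\gamma_{p,p}$; running the argument above on the localized equation and using $B^{\alpha+\gamma}_{p,p}\hookrightarrow C^{\alpha+\gamma-d/p}$ with $p\uparrow\infty$ costs an arbitrarily small loss of regularity, harmless since $\gamma<\beta$ leaves room — and the case of small $\gamma$ then follows from that of $\gamma$ near $\beta$ because $u$ is bounded. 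Multiplying the block inequality by $2^{\gamma j}$ and taking the supremum, the right-hand side is handled term by term: $\|f\|_{C^\gamma}\le\|f\|_{C^\beta}$; the remainder obeys $\|\cR\|_{C^\gamma}\le\delta\|\cR\|_{C^{\alpha+\beta+\gamma-1}}+C_\delta\|\cR\|_\infty\lesssim\delta\|b\|_{C^\beta}\|u\|_{C^{\alpha+\gamma}}+C_\delta\|b\|_{C^\beta}\|u\|_{C^{1+\eps'}}$, which after interpolating $\|u\|_{C^{1+\eps'}}\le\delta'\|u\|_{C^{\alpha+\gamma}}+C_{\delta'}\|u\|_\infty$ (legitimate as $1+\eps'<\alpha+\gamma$ for $\eps'$ small, again by $\alpha+\beta>1$) leaves a small multiple of $\|u\|_{C^{\alpha+\gamma}}$ plus $C\|u\|_\infty$; and the commutator sum, whose summand decays like $2^{(1-\alpha-\beta)j}$, likewise contributes a small multiple of $\|u\|_{C^{\alpha+\gamma}}$ at high frequencies and $\lesssim\|u\|_\infty$ at the finitely many low ones. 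Choosing the small parameters and then $\lambda_0$ large enough to absorb the residual $\|u\|_\infty\le\|u\|_{C^\gamma}$ closes the estimate. The main obstacle — and essentially the only place the hypothesis $\beta\in(1-\alpha/2,1)$, in particular $\alpha+\beta>1$, enters — is this middle step: organizing the paralinearization so that the energy identity sees $\div S_{j-1}b$ rather than $\div b$, and so that $\cR$ and the commutators $\cE_j$ are genuinely of higher order and hence absorbable, uniformly for the whole range $\alpha\in(0,2)$; once this is in place the remainder is bookkeeping.
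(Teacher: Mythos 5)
Your proposal follows essentially the same route as the paper: the paper's proof simply invokes \cite[Theorem 3.8]{zhao2021regularity} with Lemma \ref{Le-Bernstein2} substituted for the Bernstein-type lemma there — i.e., precisely the Littlewood--Paley $L^p$-energy argument with paralinearized drift and absorbable $\div S_{j-1}b$ term that you carry out — and then passes to the H\"older scale via localized $W^s_p$-estimates and the embedding $W^s_p\hookrightarrow C^{s-d/p}$ with $p$ large, exactly as in your final step. The one quibble is that $T_{\nabla u}b$ lies only in $C^\beta$ (not $C^{\alpha+\beta+\gamma-1}$ once $\alpha+\gamma>1$), but since $\gamma<\beta$ this piece is controlled by $\|\nabla u\|_\infty\|b\|_{C^\beta}$ and absorbed by the same interpolation of $\|u\|_{C^1}$ you already invoke, so the argument closes.
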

\begin{proof}
For all $p\in [2,\infty)$, replacing  \cite[Lemma 3.1]{zhao2021regularity} by our Lemma \ref{Le-Bernstein2} and following the proof for \cite[Theorem 3.8]{zhao2021regularity}, one can see that 
\begin{equation}\label{eq-WC}
    \sup_{z\in \mR^d} \l(\lambda \|u\chi(\cdot+z)\|_{W^{\beta}_p}+\|u\chi(\cdot+z)\|_{W^{\alpha+\beta}_p} \r) \leq C \sup_{z\in \mR^d}\|f\chi(\cdot+z)\|_{W^\beta_p}.
\end{equation}
Due to \cite[Lemma 2.6]{zhao2021regularity}, for any $s>d/p$ and $\eps>0$, it holds that 
$$
\|u\|_{C^{s-d/p}} \leq C \sup_{z\in \mR^d} \|u\chi(\cdot+z)\|_{W^{s}_p}\leq C \|u\|_{C^{s+\eps}}. 
$$
Thus, for any $0<\gamma<\theta<\beta$ and $p=d/(\theta-\gamma)$, we obtain 
\begin{align*}
    \lambda \|u\|_{C^\gamma}+ \|u\|_{C^{\alpha+\gamma}} \leq& \sup_{z\in \mR^d} \l(\lambda \|u\chi(\cdot+z)\|_{W^{\theta}_p}+\|u\chi(\cdot+z)\|_{W^{\alpha+\theta}_p} \r) \\
    \overset{\eqref{eq-WC}}{\leq}& C \sup_{z\in \mR^d}\|f\chi(\cdot+z)\|_{W^\theta_p}\leq C \|f\|_{C^\beta}.
\end{align*}
\end{proof}

\section{Proof of main result}\label{Sec-main}
\begin{proof}[Proof of Theorem \ref{Thm}]
Fix $\gamma\in (1-\frac{\alpha}{2}, \beta)$. Based on our assumptions and Theorem \ref{Thm-Regularity}, we can conclude that equation \eqref{Eq-PDE} with $f=b$ has a unique solution $u\in C^{\alpha+\gamma}$. Furthermore, if $\theta :=\frac{\alpha+\gamma-1}{\alpha}>0$ and $\lambda$ is large enough, by interpolation, the following inequality holds:
\begin{equation}\label{eq-u}
    \|u\|_{C^1} \leq \|u\|_{C^\gamma}^\theta \|u\|_{C^{\alpha+\gamma}}^{1-\theta} \leq  C \lambda^{-\theta}\leq \frac{1}{4}.
\end{equation}
Let $N(\d r, \d z)$ be the Poisson random measure associated with $Z$, whose intensity measure is given by $\d r\,\nu(\d z)$. Then  
\[
    Z_t= \int_0^t\!\!\!\int_{|z|<1} z \widetilde{N}(\d r, \d z)+ \int_0^t\!\!\!\int_{|z|\geq 1} z {N}(\d r, \d z),  
\]
where $\widetilde{N}(\d r, \d z)=N(\d r, \d z)-\d r\nu(\d z)$. 
Thanks to the generalized It\^o's formula (see \cite{priola2012pathwise}), we have 
\begin{align*}
&u(X^n_t)-u(X^n_s)\\
=& \int_s^t \Big( Lu(X^n_r)+b(X^n_{k_n(r)})\cdot \nabla u(X^n_r) \Big)\,\d r   + \int_s^t\!\!\!\int_{\mR^d}\Big(u(X^n_{r-}+z)-u(X^n_{r-})\Big) \widetilde N(\d r, \d z )\\
=& \int_s^t  (\lambda u-b)(X^n_r) \d r +  \int_s^t  \Big(b(X^n_{k_n(r)}- b(X^n_r))\Big) \cdot \nabla u(X^n_r) \d r \\
&+ \int_s^t\!\!\!\int_{\mR^d}\Big(u(X^n_{r-}+z)-u(X^n_{r-})\Big) \widetilde N(\d r, \d z ),
\end{align*}
which implies 
\begin{equation}\label{eq-bn}
\begin{aligned}
\int_s^t b(X^n_r) \d r =& u(X^n_s)- u(X^n_t) + \lambda \int_s^t u(X^n_r) \d r +  \int_s^t  \Big(b(X^n_{k_n(r)}- b(X^n_r))\Big) \cdot \nabla u(X^n_r) \d r \\
&+ \int_s^t\!\!\!\int_{\mR^d}\Big(u(X^n_{r-}+z)-u(X^n_{r-})\Big) \widetilde N(\d r, \d z ). 
\end{aligned}
\end{equation}
Similarly, 
\begin{equation}\label{eq-b}
\begin{aligned}
\int_s^t b(X_r) \d r =& u(X_s)- u(X_t) + \lambda \int_s^t u(X_r) \d r \\
&+ \int_s^t\!\!\!\int_{\mR^d} \Big(u(X_{r-}+z)-u(X_{r-})\Big) \widetilde N(\d r, \d z ). 
\end{aligned}
\end{equation}
By definition, 
\begin{equation*}
\begin{aligned}
X^n_t-X_t=&X_0^n-X_0+\int_0^t \Big( b(X^n_{k_n(r)})-b(X^n_{r}) \Big) \d r + \int_0^t \Big(b(X^n_r)-b(X_r)\Big) \d r
\end{aligned}
\end{equation*}
Plugging \eqref{eq-bn} and \eqref{eq-b} in to the above equation, we obtain 
\begin{equation*}
\begin{aligned}
X^n_t-X_t=& X_0^n-X_0+ \int_0^t \Big( b(X^n_{k_n(r)})-b(X^n_{r}) \Big) \d r +\Big(u(X^n_0)-u(X_0) \Big) + \Big(u(X_t)-u(X^n_t) \Big) \\
&+ \lambda \int_0^t \Big( u(X^n_r)-u(X_r) \Big) \d r +  \int_0^t  \Big(b(X^n_{k_n(r)}- b(X^n_r))\Big) \cdot \nabla u(X^n_r) \d r \\
&+ \int_0^t\!\!\!\int_{\mR^d} \l[ \Big(u(X^n_{r-}+z)-u(X^n_{r-})\Big)-\Big(u(X_{r-}+z)-u(X_{r-})\Big)\r] \widetilde N(\d r, \d z ) \\
=&:\sum_{i=1}^7 I_i. 
\end{aligned}
\end{equation*}
For $I_2$ and $I_6$, by our assumption on $b$ and \eqref{eq-u}, we have 
$$
 \bE |I_2|^p\lesssim\int_0^t \bE \l(1\wedge | X^n_{k_n(r)}-X^n_r |^{\beta}\r)^p, \quad \bE |I_6|^p\lesssim\int_0^t \bE \l(1\wedge | X^n_{k_n(r)}-X^n_r |^{\beta}\r)^p. 
$$
For $I_3$, $I_4$ and $I_5$, again using \eqref{eq-u}, one sees that 
\[
|I_3| \leq \frac{1}{4}|X_0^n-X_0|,\quad |I_4| \leq \frac{1}{4}|X_t^n-X_t|, \quad I_5\leq C \lambda^{1-\theta} \int_0^t |X^n_r-X_r| \d r. 
\]
For $I_7$, by BDG inequatlity and the basic fact that $|f(x+z)-f(x)-f(y+z)+f(y)|\lesssim \|f\|_{C^{\alpha+\gamma}} |x-y| (1\wedge |z|^{\alpha+\gamma-1})$, we get 
\begin{equation*}
 \begin{aligned}
 \bE |I_7|^p \lesssim& \bE \l( \int_0^t\!\!\!\int_{\mR^d} |X^n_{r-}-X_{r-}|^2 (1\wedge |z|)^{2\alpha+2\gamma-2} \nu(\d z )\,\d r ) \r)^{p/2}\\
 \lesssim& \bE \l(\int_0^t |X^n_{r}-X_r|^2 \d r\r)^{p/2}. 
 \end{aligned}
\end{equation*}
Combining all the estimates above, we arrive 
\begin{equation*}
\begin{aligned}
\bE \sup_{t\in [0,T_0]} |X^n_t-X_t|^p \lesssim& \bE |X^n_0-X_0|^p + (T_0^p+T_0^{p/2})\bE \sup_{t\in [0,T_0]} |X^n_t-X_t|^p \\
&+ \int_0^{T_0} \bE(1\wedge |X^n_{k_n(t)}-X^n_t|^{\beta p}) \d t. 
\end{aligned}
\end{equation*}
Therefore, for $T_0\in (0,1]$ sufficiently small, we have 
\begin{align*}
\bE \sup_{t\in [0,T_0]} |X^n_t-X_t|^p \lesssim& \bE |X^n_0-X_0|^p+  \int_0^{T_0} \bE\l(1\wedge |Z_{k_n(t)}-Z_t|^{\beta p}\r) \d t\\
\lesssim & \bE |X^n_0-X_0|^p+  \bE ( 1\wedge |Z_{1/n}|^{\beta p} ).  
\end{align*}
Iterating this procedure finite times we reach the full time horizon $[0,1]$. 
\end{proof}

\begin{corollary}
   If $Z$ is a nondegenerate $\alpha$-stable process, then  
    \begin{align*}
        \bE \sup_{t\in [0,1]} |X_t^n-X_t|^p \leq C  \bE|X^n_0-X_0|^p+C \left\{
        \begin{aligned}
          & n^{-p\beta/\alpha},&  &\mbox{ if } 0<p<\alpha/\beta, \\
          &n^{-1}\log n,&  &\mbox{ if } p=\alpha/\beta, \\
          &n^{-1},&  &\mbox{ if } p>\alpha/\beta. 
        \end{aligned}
        \right.
    \end{align*} 
\end{corollary}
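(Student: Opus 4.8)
The plan is to derive the corollary directly from the main estimate \eqref{Eq-main} by computing the quantity $\bE(1\wedge |Z_{1/n}|^{p\beta})$ for a nondegenerate $\alpha$-stable process $Z$. The key point is the self-similarity of $Z$: since $Z$ is $\alpha$-stable, $Z_{1/n} \overset{d}{=} n^{-1/\alpha} Z_1$, so that
\[
\bE\l(1\wedge |Z_{1/n}|^{p\beta}\r) = \bE\l(1\wedge n^{-p\beta/\alpha} |Z_1|^{p\beta}\r).
\]
Thus the whole matter reduces to estimating $\bE\l(1\wedge n^{-p\beta/\alpha}|Z_1|^{p\beta}\r)$ as $n\to\infty$, and the three regimes in the statement correspond exactly to whether the random variable $|Z_1|^{p\beta}$ has a finite expectation. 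The relevant fact about nondegenerate $\alpha$-stable laws is the tail behavior $\mP(|Z_1|>r) \asymp r^{-\alpha}$ as $r\to\infty$ (and $Z_1$ has a bounded density, so there is no issue near the origin); equivalently $\bE |Z_1|^q < \infty$ iff $q<\alpha$.

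First I would split according to whether $q := p\beta$ is less than, equal to, or greater than $\alpha$. If $q < \alpha$, then $\bE|Z_1|^{q} < \infty$, so simply bounding $1\wedge n^{-q/\alpha}|Z_1|^{q} \leq n^{-q/\alpha}|Z_1|^{q}$ gives $\bE(1\wedge|Z_{1/n}|^{q}) \leq n^{-q/\alpha}\,\bE|Z_1|^{q} = C n^{-p\beta/\alpha}$, which is the first line. If $q > \alpha$, I would split the expectation at the level $|Z_1| \leq n^{1/\alpha}$ and $|Z_1| > n^{1/\alpha}$: on the first event bound the integrand by $n^{-q/\alpha}|Z_1|^{q}$ and use $\bE\l(|Z_1|^{q}\1_{\{|Z_1|\leq n^{1/\alpha}\}}\r) \lesssim \int_0^{n^{1/\alpha}} r^{q-1}\mP(|Z_1|>r)\,\d r \lesssim \int_1^{n^{1/\alpha}} r^{q-1-\alpha}\,\d r \lesssim n^{(q-\alpha)/\alpha}$ (since $q-\alpha>0$), giving a contribution $\lesssim n^{-q/\alpha} n^{(q-\alpha)/\alpha} = n^{-1}$; on the second event bound the integrand by $1$ and use $\mP(|Z_1|>n^{1/\alpha}) \lesssim n^{-1}$, again a contribution $\lesssim n^{-1}$. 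This gives the third line. The borderline case $q=\alpha$ is the same splitting, except now $\int_1^{n^{1/\alpha}} r^{-1}\,\d r = \tfrac{1}{\alpha}\log n$, producing the extra logarithmic factor $n^{-1}\log n$ of the middle line.

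Finally I would feed these three bounds back into \eqref{Eq-main} to obtain the stated inequality, noting that the constant $C$ may be taken to absorb $\bE|Z_1|^q$ and the implicit tail constants, and depends only on the data already listed in Theorem \ref{Thm} together with the normalization of the stable process. The main obstacle — really the only nontrivial point — is having the two-sided control $\mP(|Z_1|>r)\asymp r^{-\alpha}$ available; this is standard for nondegenerate $\alpha$-stable laws (the lower bound on the tail, which is what forces the logarithmic loss at $q=\alpha$ and prevents a faster rate than $n^{-1}$ for $q>\alpha$, follows from nondegeneracy of the Lévy measure and can be quoted from standard references on stable distributions), so modulo citing this fact the corollary is a short computation. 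I would also remark that for $q\le\alpha$ only the upper tail bound is needed, so the result is in fact slightly more robust there.
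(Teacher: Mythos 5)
Your proposal is correct and follows essentially the same route as the paper: self-similarity reduces everything to $\bE(1\wedge n^{-q/\alpha}|Z_1|^q)$ with $q=p\beta$, which is then estimated by splitting at $|Z_1|\le n^{1/\alpha}$ and integrating the tail bound $\bP(|Z_1|>r)\lesssim r^{-\alpha}$, yielding the three regimes. The only (harmless) difference is that you treat the case $q<\alpha$ by directly invoking finiteness of $\bE|Z_1|^q$ rather than running the unified tail-integral computation, and you correctly note that only the upper tail bound is actually needed.
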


\begin{proof}
Since $Z_{t}\overset{d}{=} t^{1/\alpha}Z_1$, we have 
\begin{equation*}
    \begin{aligned}
    \bE \l(1\wedge |Z_{1/n}|^{p}\r) =& \bE \l(1\wedge n^{-\frac{p}{\alpha}}|Z_{1}|^{p}\r)\\
    =& \bP(|Z_1|> n^{\frac{1}{\alpha}}) + n^{-\frac{p}{\alpha}}\bE\l(|Z_1|^{p}\1_{\{|Z_1|\leq n^{1/\alpha}\}}\r)\\
    \lesssim& n^{-1} + n^{-\frac{p}{\alpha}} \int_0^{n^{1/\alpha}} t^{p-1} \bP (|Z_1|>t) \d t \\
    \lesssim & n^{-1} + n^{-\frac{p}{\alpha}}+ n^{-\frac{p}{\alpha}} \int_1^{n^{1/\alpha}} t^{p-\alpha-1} \d t \\
    \lesssim & \left\{
        \begin{aligned}
          & n^{-p/\alpha},&  &\mbox{ if } 0<p<\alpha, \\
          &n^{-1}\log n,&  &\mbox{ if } p=\alpha, \\
          &n^{-1},&  &\mbox{ if } p>\alpha. 
        \end{aligned}
        \right.
\end{aligned}
\end{equation*}
So we complete our proof. 
\end{proof}

\bibliographystyle{alpha}
\bibliography{mybib}

\begin{thebibliography}{TTW74}

\bibitem[BDG21]{butkovsky2021approximation}
Oleg Butkovsky, Konstantinos Dareiotis, and M\'{a}t\'{e} Gerencs\'{e}r.
\newblock Approximation of {SDE}s: a stochastic sewing approach.
\newblock {\em Probab. Theory Related Fields}, 181(4):975--1034, 2021.

\bibitem[BHY19]{bao2019convergence}
Jianhai Bao, Xing Huang, and Chenggui Yuan.
\newblock Convergence rate of {E}uler-{M}aruyama scheme for {SDE}s with
  {H}\"{o}lder-{D}ini continuous drifts.
\newblock {\em J. Theoret. Probab.}, 32(2):848--871, 2019.

\bibitem[BJ22]{bencheikh2022convergence}
Oumaima Bencheikh and Benjamin Jourdain.
\newblock Convergence in total variation of the {E}uler-{M}aruyama scheme
  applied to diffusion processes with measurable drift coefficient and additive
  noise.
\newblock {\em SIAM J. Numer. Anal.}, 60(4):1701--1740, 2022.

\bibitem[CMZ07]{chen2007new}
Qionglei Chen, Changxing Miao, and Zhifei Zhang.
\newblock A new {B}ernstein’s inequality and the 2d dissipative
  quasi-geostrophic equation.
\newblock {\em Communications in mathematical physics}, 271(3):821--838, 2007.

\bibitem[CSZ18]{chen2018stochastic}
{Z}hen-{Q}ing {C}hen, {R}enming {S}ong, and {X}icheng {Z}hang.
\newblock {S}tochastic flows for {L}\'evy processes with {H}\"{o}lder drifts.
\newblock {\em Revista Matemática Iberoamericana}, 34(4), 2018.

\bibitem[CZZ21]{chen2021supercritical}
Zhen-Qing Chen, Xicheng Zhang, and Guohuan Zhao.
\newblock Supercritical {SDE}s driven by multiplicative stable-like {L}{\'e}vy
  processes.
\newblock {\em Transactions of the American Mathematical Society},
  374(11):7621--7655, 2021.

\bibitem[DG20]{dareiotis2020regularisation}
Konstantinos Dareiotis and M\'{a}t\'{e} Gerencs\'{e}r.
\newblock On the regularisation of the noise for the {E}uler-{M}aruyama scheme
  with irregular drift.
\newblock {\em Electron. J. Probab.}, 25:Paper No. 82, 18, 2020.

\bibitem[GK96]{gyongy1996existence}
Istv\'{a}n Gy\"{o}ngy and Nicolai Krylov.
\newblock Existence of strong solutions for {I}t\^{o}'s stochastic equations
  via approximations.
\newblock {\em Probab. Theory Related Fields}, 105(2):143--158, 1996.

\bibitem[HL18]{huang2018euler}
Xing Huang and Zhong-Wei Liao.
\newblock The {E}uler-{M}aruyama method for {S}({F}){DE}s with {H}\"{o}lder
  drift and {$\alpha$}-stable noise.
\newblock {\em Stoch. Anal. Appl.}, 36(1):28--39, 2018.

\bibitem[KS19]{kuhn2019strong}
Franziska K\"{u}hn and Ren\'{e}~L. Schilling.
\newblock Strong convergence of the {E}uler-{M}aruyama approximation for a
  class of {L}\'{e}vy-driven {SDE}s.
\newblock {\em Stochastic Process. Appl.}, 129(8):2654--2680, 2019.

\bibitem[LS17]{leobacher2017strong}
Gunther Leobacher and Michaela Sz\"{o}lgyenyi.
\newblock A strong order 1/2 method for multidimensional {SDE}s with
  discontinuous drift.
\newblock {\em Ann. Appl. Probab.}, 27(4):2383--2418, 2017.

\bibitem[MGY20]{muller2020performance}
Thomas M\"{u}ller-Gronbach and Larisa Yaroslavtseva.
\newblock On the performance of the {E}uler-{M}aruyama scheme for {SDE}s with
  discontinuous drift coefficient.
\newblock {\em Ann. Inst. Henri Poincar\'{e} Probab. Stat.}, 56(2):1162--1178,
  2020.

\bibitem[MPT17]{menoukeu2017strong}
Olivier Menoukeu~Pamen and Dai Taguchi.
\newblock Strong rate of convergence for the {E}uler-{M}aruyama approximation
  of {SDE}s with {H}\"{o}lder continuous drift coefficient.
\newblock {\em Stochastic Process. Appl.}, 127(8):2542--2559, 2017.

\bibitem[MX18]{mikulevivcius2018rate}
R.~Mikulevi\v{c}ius and Fanhui Xu.
\newblock On the rate of convergence of strong {E}uler approximation for {SDE}s
  driven by {L}\'evy processes.
\newblock {\em Stochastics}, 90(4):569--604, 2018.

\bibitem[NT17]{long2017strong}
Hoang-Long Ngo and Dai Taguchi.
\newblock Strong convergence for the {E}uler-{M}aruyama approximation of
  stochastic differential equations with discontinuous coefficients.
\newblock {\em Statist. Probab. Lett.}, 125:55--63, 2017.

\bibitem[{P}ri12]{priola2012pathwise}
{E}nrico {P}riola.
\newblock {P}athwise uniqueness for singular {{S}{D}{E}}s driven by stable
  processes.
\newblock {\em {O}saka {J}ournal of {M}athematics}, 49(2):421--447, 2012.

\bibitem[{P}ri15]{priola2015stochastic}
{E}nrico {P}riola.
\newblock Stochastic flow for {{SDE}}s with jumps and irregular drift term.
\newblock {\em Banach Center Publications}, 105(1):421--44193--210, 2015.

\bibitem[SYZ22]{suo2022weak}
Yongqiang Suo, Chenggui Yuan, and Shao-Qin Zhang.
\newblock Weak convergence of {E}uler scheme for {SDE}s with low regular drift.
\newblock {\em Numer. Algorithms}, 90(2):731--747, 2022.

\bibitem[Tri92]{triebel1992theory}
Hans. Triebel.
\newblock {\em Theory of function spaces II}, volume~84.
\newblock Birkh\"auser Basel, 1992.

\bibitem[TTW74]{tanaka1974perturbation}
{H}iroshi {T}anaka, {M}asaaki {T}suchiya, and {S}hinzo {W}atanabe.
\newblock {P}erturbation of drift-type for {L}{\'e}vy processes.
\newblock {\em {J}ournal of {M}athematics of {K}yoto {U}niversity},
  14(1):73--92, 1974.

\bibitem[Zha21]{zhao2021regularity}
Guohuan Zhao.
\newblock Regularity properties of jump diffusions with irregular coefficients.
\newblock {\em Journal of Mathematical Analysis and Applications},
  502(1):125220, 2021.

\end{thebibliography}

\end{document}